\newtheorem{theorem}{Theorem}[section]
\newtheorem{proposition}[theorem]{Proposition}
\newtheorem{corollary}[theorem]{Corollary}
\theoremstyle{definition}
\theoremstyle{remark}
\newcommand{\CC}{\mathbb{C}}
\newcommand{\NN}{\mathbb{N}}
\newcommand{\PP}{\mathbb{P}}
\newcommand{\del}{\delta}
\newcommand{\diff}{\mathrm{d}}
\newcommand{\fb}{\mathbf{f}}
\newcommand{\db}{\mathbf{d}}
\newcommand{\qb}{\mathbf{q}}
\newcommand{\Sp}{\, \mathrm{Span}}
\newcommand{\oschu}{\Sigma^\circ}
\newcommand{\Sym}{\operatorname{Sym}}
\newcommand{\Gr}{\operatorname{Gr}}
\newcommand{\Symd}{\Gamma_{n+1}(d)}
\newcommand{\Symdb}{\Gamma_{n+1}(\db)}
\newcommand{\SymL}{S_{\gL'}(\db)}
\newcommand{\gL}{\Lambda}
\newcommand{\calA}{\mathcal{A}}
\newcommand{\fanosp}{Fano scheme }
\newcommand{\fanossp}{Fano schemes }
\newcommand{\qi}{q^{(i)}}
\newcommand{\Qi}{Q^{(i)}}
\providecommand*{\diff}%
        {\@ifnextchar^{\DIfF}{\DIfF^{}}}
\def\DIfF^#1{%
        \mathop{\mathrm{\mathstrut \Delta}}%
                \nolimits^{#1}\gobblespace
}
\def\gobblespace{%
        \futurelet\diffarg\opspace}
\def\opspace{%
        \let\DiffSpace\!%
        \ifx\diffarg(%
                \let\DiffSpace\relax
        \else
                \ifx\diffarg\[%
                        \let\DiffSpace\relax
                \else
                        \ifx\diffarg\{%
                                \let\DiffSpace\relax
                        \fi\fi\fi\DiffSpace}
\author{Franz Kir\'aly and Paul Larsen}
\address{Machine Learning Group, Technische Universit\"at Berlin, 10623 Berlin, Germany\\ and Discrete Geometry Group, Freie Universit\"at Berlin, 14195 Berlin, Germany}
\email{franz.j.kiraly@tu-berlin.de}
\address{Algebraic Geometry Group, Humboldt-Universit\"at zu Berlin, 10099 Berlin, Germany}
\email{larsen@math.hu-berlin.de}
\title{Fano schemes of generic intersections and machine learning}
\numberwithin{equation}{section}
\begin{document}

\begin{abstract}
We investigate Fano schemes of conditionally generic intersections, i.e. of hypersurfaces in projective space chosen generically up to additional conditions. Via a correspondence between generic properties of algebraic varieties and events in probability spaces that occur with probability one, we use the obtained results on Fano schemes to solve a problem in machine learning.
\end{abstract}
\maketitle

\section{Introduction}
Properties of generic points of a variety, or generic members of a family of varieties, are often much easier to study than their non-generic counterparts. An obvious reason why generic properties are simpler to study is that open sets in algebraic geometry are also dense, and so generic properties are topologically very natural ones to consider. One example of this phenomenon is the family of lines on a hypersurface in projective space, i.e. the hypersurface's Fano scheme of lines: the dimension, smoothness and connectedness of this Fano scheme for a generic hypersurface were determined in \cite{MR510081}, whereas its dimension for arbitrary smooth hypersurfaces is still unknown except in certain cases \cite{MR533324, MR1646558, MR2222727,MR2542223, MR2745753}. The conjectured dimension in general is the content of the \emph{Debarre--de Jong conjecture} \cite{MR1841091}. This paper studies Fano schemes of $k$-planes for intersections of hypersurfaces in $\PP^n$ that are chosen generically up to some additional property, which we call \emph{conditional genericity}. Our main results characterize the dimension of these Fano schemes, with the chosen conditions coming from an application to machine learning.

If generic properties are sometimes the only ones accessible in algebraic geometry, they are in a sense the only ones of importance in statistics and machine learning. To be more precise, given a probability space $(\Omega, \calA, \mu)$, an event $E$ occurs \emph{almost surely} if it occurs with probability one, that is, if $\mu(E) = 1$. Suppose that $\Omega$ is an algebraic variety, and $\mu$ is a continuous measure. If \emph{generic events} in $\Omega$ are taken to be (Zariski) open, measurable subsets, then non-generic events occur with probability zero. Roughly speaking, non-generic events may be ignored in statistics and machine learning because they never occur.

Our application to machine learning involves  \emph{stationary subspace analysis} (SSA) \cite{bunmeikirmul09finding}, a method for multivariate time series analysis that has been applied to brain-computer interface research. We frame the problem and its relation to Fano schemes in Section \ref{sec:app}, but present here one simplified application from brain-computer interface research. Suppose our goal is to isolate a brain signal for a particular task from other signals that vary with time and are irrelevant to this task, e.g. we seek to separate one type of brain activity from other activity related to environmental factors or so-called \emph{alpha oscillations} due to fatigue. The data is divided into $N$ epochs, with each epoch modeled as a random variable. In Section 3, we develop a precise \emph{identifiability} criterion for SSA---i.e. how many epochs are necessary to identify the stationary signal---in terms of the dimension of a certain Fano scheme. 

We now describe the structure of the paper. Section \ref{sec:main} develops results on Fano scheme of various conditionally generic intersections. In Section \ref{sec:commonSubspace} we study the Fano scheme of generic intersections of hypersurfaces under the assumption that all hypersurfaces contain some fixed $k$-plane. Section \ref{sec:lowerRank} restricts to Fano schemes of intersections of quadric hypersurfaces, but conditions the genericity assumption on the quadrics both containing a fixed $k$-plane and having a fixed rank. We conclude with Section \ref{sec:app}, where we relate results of the previous sections to machine learning.

\textbf{Notation and conventions:}
All work below is carried out over the complex numbers, although the results hold for any algebraically closed field. 
Unless otherwise specified, dimension will refer to projective dimension; e.g. $\dim \emptyset = -1.$ For a linear subspace $\gL \subseteq
  \PP^n$ of dimension $k$, we write $[\gL]$ for the corresponding
  element of the Grassmannian $\Gr(k,n)$, and when considering a vector subspace $S$ instead as a projective subpace, we write $\PP \, S$. 

For the vector space of homogeneous polynomials of degree $d$ in $n+1$ variables we write $\Symd$. For a mulitidegree $\db = (d_1, \ldots, d_s)$, the product of the corresponding vector spaces of polynomials will be denoted by $\Symdb$. 

A brief note for the reader coming from outside of algebraic geometry: the word ``scheme'' can safely be replaced with ``variety'' in almost all appearances below. The second author has also prepared a companion set of notes to this paper for those with rudimentary knowledge of algebraic geometry \cite{FanoNotes}.

\textbf{Acknowledgements:}
We would like to thank Bernd Sturmfels for introducing us to one another, for suggesting the connection to Fano schemes, and for his encouragement throughout. We are grateful to Fabian M\"uller for his invaluable help at numerous junctures during this project. We would also like to thank Luke Oeding for his insights on symmetric tensors. Several of the results of this paper were first tested with Macaulay2 \cite{M2} and MATLAB(R)/Octave.

\section{\fanossp of intersections of conditionally generic hypersurfaces}
\label{sec:main}
The $k$th Fano scheme of a projective variety $X$, denoted $F_k(X)$, is
the subscheme of the Grassmannian $\Gr(k,n)$ parametrizing $k$-planes
contained in $X$. If $f^{(1)}, \ldots, f^{(s)}$ are defining
equations of $X$, with $\deg f^{(i)} = d_i$, then we write $X = V(\fb)$, where $\fb = (f^{(1)},
\ldots, f^{(s)})$, and call $\db = (d_1, \ldots, d_s)$ the
multidegree of $\fb$.

The Fano scheme $F_k(V(\fb))$ has been studied in
\cite{MR1654757} under the assumption that $\fb \in \Symdb$ is chosen
generically. Let $n, k,s \in \NN$ and $\db = (d_1, \ldots, d_s) \in \NN^s$, with
all $d_i \geq 2$, and set
\begin{align*}
&\del(n,\db,k)  = (k+1)(n-k) - \binom{\db+k}{k}.
\end{align*}
\begin{theorem}[\cite{MR1654757}]
\label{thm:DM}
Let $[\fb] = [(f_1, \ldots, f_s)] \in \PP\, \Symdb$ be generic, and
suppose that $\db \neq (2)$.
\begin{enumerate}
\item If 
$\del(n,\db,k) < 0$,
 then the
  \fanosp$F_k(V(\fb))$ is empty.
\item If 
$\del(n,\db,k) \geq 0$, 
then the \fanosp
  $F_k(V(\fb))$ is smooth of dimension $\del(n, \db, k)$.
\end{enumerate}
\end{theorem}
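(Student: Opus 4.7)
\textbf{Proof proposal for Theorem \ref{thm:DM}.} The plan is the classical universal Fano variety / incidence correspondence argument. Consider the incidence variety
\[
\calI = \{([\gL],[\fb]) \in \Gr(k,n) \times \PP\, \Symdb : \gL \subseteq V(\fb)\},
\]
with the two projections $\pi_1 \colon \calI \to \Gr(k,n)$ and $\pi_2 \colon \calI \to \PP\, \Symdb$. By construction $F_k(V(\fb)) = \pi_2^{-1}([\fb])$, so everything reduces to understanding $\pi_2$. First I would analyze $\pi_1$: the fiber over $[\gL]$ parametrizes tuples all of whose components vanish on $\gL$. Restriction to $\gL \cong \PP^k$ gives a surjection $\Gamma_{n+1}(d_i) \to \Gamma_{k+1}(d_i)$ with target of dimension $\binom{d_i+k}{k}$, so $\pi_1^{-1}([\gL])$ is a projective subspace of codimension $\binom{\db+k}{k}$ in $\PP\, \Symdb$. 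Since $\mathrm{PGL}_{n+1}$ acts transitively on $\Gr(k,n)$, $\pi_1$ is a projective bundle and $\calI$ is smooth and irreducible of dimension $\dim \PP\, \Symdb + \delta(n,\db,k)$.

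Part (1) is then immediate: if $\delta(n,\db,k) < 0$, then $\dim \calI < \dim \PP\, \Symdb$, so $\pi_2$ cannot be dominant and the generic fiber is empty. For part (2), the expected fiber dimension count is $\dim \calI - \dim \PP\, \Symdb = \delta(n,\db,k)$, but this is only the true generic fiber dimension if $\pi_2$ is in fact \emph{dominant}. This is the main obstacle: a dimension count alone does not rule out the possibility that $\pi_2$ collapses $\calI$ onto a proper subvariety of $\PP\, \Symdb$ with higher-dimensional fibers.

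To prove dominance, I would exhibit a single pair $([\gL_0],[\fb_0])\in \calI$ at which the differential $d\pi_2$ is surjective, and then invoke semicontinuity. Taking $\gL_0 = V(x_0,\ldots,x_{n-k-1})$ and writing each $f^{(i)} = \sum_{j=0}^{n-k-1} x_j g_j^{(i)}$, the tangent-space calculation at $[\gL_0]\in F_k(V(\fb_0))$ identifies the obstruction with the kernel of the multiplication / normal-bundle map
\[
\mu \colon H^0(\gL_0, N_{\gL_0/\PP^n}) \;\longrightarrow\; H^0\!\bigl(\gL_0,\textstyle\bigoplus_i \calO_{\gL_0}(d_i)\bigr),
\]
a linear map between spaces of dimensions $(k+1)(n-k)$ and $\binom{\db+k}{k}$. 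The task is then to choose the $g_j^{(i)}$ so that $\mu$ has maximal rank. Since $\mu$ depends linearly on the $g_j^{(i)}$ and the target is spanned by products of the $g_j^{(i)}$ with linear forms on $\gL_0$, this reduces to a concrete non-vanishing statement about polynomial multiplication, which can be verified directly (e.g.\ by specialization to monomial $g_j^{(i)}$). The hypothesis $\db \neq (2)$ enters precisely here: for a single quadric the analogous map is forced to be symmetric and fails to have the expected rank, so the argument must exclude that case.

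Granting dominance, the generic fiber of $\pi_2$ has dimension exactly $\delta(n,\db,k)$. Smoothness of this generic fiber then follows from generic smoothness in characteristic zero: $\calI$ and $\PP\, \Symdb$ are smooth and irreducible, $\pi_2$ is dominant, and we are over $\CC$, so $\pi_2^{-1}([\fb])$ is smooth for $[\fb]$ in a dense open subset. This yields both the dimension and smoothness assertions of part (2) simultaneously.
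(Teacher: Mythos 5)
This theorem is imported from \cite{MR1654757}; the paper does not reprove it, but your outline is exactly the argument of that source and the one the paper leans on later (e.g.\ in Corollary \ref{cor:delgeq0}): the incidence correspondence with $\pi_1$ a projective bundle whose fibers have codimension $\binom{\db+k}{k}$, non-dominance of $\pi_2$ for part (1), and dominance of $\pi_2$ plus generic smoothness in characteristic zero for part (2). The architecture is sound; the only point where your write-up is a plan rather than a proof is the claim that the multiplication map $\mu$ attains maximal rank for a suitable choice of the $g_j^{(i)}$ --- that verification is the entire technical content of Debarre--Manivel's theorem, and dismissing it as ``verified directly by specialization to monomials'' leaves the hard step unproved. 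Note also that the exclusion $\db \neq (2)$ is needed for the truth of part (2), not merely for the method: for a smooth quadric in $\PP^4$ and $k=2$ one has $\del(4,(2),2)=0$ yet $F_2$ is empty, which is consistent with your observation that $\mu$ fails to have the expected rank in the single-quadric case.
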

\noindent The case of a single quadric is also settled in \cite{MR1654757} using
different argumentation, but
the Fano scheme of a single hypersurface carries little interest for our
application to machine learning (see Section \ref{sec:app}). Therefore
we assume in the remainder that
$\db \neq (2)$.

 We extend these results to
conditionally generic $[\fb] \in \PP \, \Symdb$ in two ways. First, in Section
\ref{sec:commonSubspace}, we take the defining equations to be generic up
to the assumption that each hypersurface $V(f^{(i)})$ contains a fixed
$k$-plane $\gL'$. And second, in Section \ref{sec:lowerRank}, we
restrict to intersections of quadrics that are generic, conditional upon
all quadrics having a fixed rank $r$ and containing a fixed
$k$-plane. Our application to machine learning involves
determining when, under the conditional genericity mentioned above,
there is equality $F_k(V(\fb)) = [\gL']$, i.e. when identifiability is
achieved (see Section \ref{sec:app} for details). Thus determining
when the Fano scheme has dimension 0 is especially important.

\subsection{Generic intersections containing a common subspace}
\label{sec:commonSubspace}
Suppose first that $\del(n,\db,k) \geq 0$. The analogue of statement (2)
in Theorem \ref{thm:DM} for generic $[\fb] \in \PP \,
\Symdb$ such that $\gL' \subseteq V(\fb)$ is a straightforward
corollary.

\begin{corollary}
\label{cor:delgeq0}
Let $\gL' \in \Gr(k,n)$ be generic, and let $[\fb] \in \PP\, \Symdb$
be generic such that $\gL' \subseteq V(\fb)$. If $\del(n, \db,k)
\geq 0$, then $\dim F_k(V(\fb)) = \del(n,\db, k)$.
\end{corollary}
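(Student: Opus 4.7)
The plan is to argue via the incidence variety
\[
I = \{([\gL], [\fb]) \in \Gr(k,n) \times \PP\, \Symdb : \gL \subseteq V(\fb)\}
\]
with the two projections $q_1$ to $\Gr(k,n)$ and $q_2$ to $\PP\, \Symdb$. Since ``$\gL \subseteq V(\fb)$'' is a linear condition on $\fb$ imposing exactly $\binom{\db + k}{k}$ conditions, the map $q_1$ exhibits $I$ as a (Zariski-locally trivial) projective bundle, so $I$ is irreducible of dimension $\dim \PP\, \Symdb + \del(n,\db,k)$ and $q_1$ is an open morphism. The projection $q_2$ is surjective: its image is closed by properness of $I$ and contains the dense locus produced by Theorem~\ref{thm:DM}. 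The fiber dimension theorem applied to this surjection then yields $\dim F_k(V(\fb)) \geq \del(n,\db,k)$ for \emph{every} $[\fb]$.

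Now set
\[
W = \{([\gL],[\fb]) \in I : \dim F_k(V(\fb)) = \del(n,\db,k)\}.
\]
Upper semicontinuity of fiber dimension for $q_2$, combined with the lower bound just established, shows that $W$ is open in $I$, and $W$ is nonempty because for any generic $[\fb]$ as in Theorem~\ref{thm:DM}(2) the assumption $\del \geq 0$ guarantees that $F_k(V(\fb))$ is nonempty, producing a point of $W$. Since $I$ is irreducible, $W$ is open and dense in $I$.

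Applying the open morphism $q_1$ then gives that $q_1(W)$ is a nonempty open---hence dense---subset of $\Gr(k,n)$. For any $[\gL'] \in q_1(W)$, the intersection $W \cap q_1^{-1}([\gL'])$ is a nonempty open subset of the irreducible fiber $q_1^{-1}([\gL']) = \{[\fb] \in \PP\, \Symdb : \gL' \subseteq V(\fb)\}$, and is therefore dense in it. This is precisely the statement that for generic $\gL'$ and generic $[\fb]$ with $\gL' \subseteq V(\fb)$, one has $\dim F_k(V(\fb)) = \del(n, \db, k)$. The one technical point is that the two ``generics'' in the statement must be quantified in the correct nested order---first for $\gL'$, then for $[\fb]$ conditional on $\gL' \subseteq V(\fb)$---which is exactly what the openness of the projective bundle projection $q_1$ lets us do cleanly.
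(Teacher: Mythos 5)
Your argument is correct and follows essentially the same route as the paper: both work with the incidence correspondence $I \subseteq \Gr(k,n) \times \PP\,\Symdb$, invoke Theorem~\ref{thm:DM} to produce a dense open locus of $[\fb]$ over which the Fano scheme has the expected dimension, and then show that for generic $[\gL']$ the linear system $\PP\,\SymL$ meets that locus. The only difference is in how that last step is executed---the paper factors $p$ through an auxiliary incidence variety over a Grassmannian of linear subspaces of $\Symdb$, whereas you push the dense open set $W$ down to $\Gr(k,n)$ via the open projective-bundle projection $q_1$---which is a cleaner way of making the same point.
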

\begin{proof}
Let $\SymL \subseteq \Symdb$ be the subpace consisting of all $s$-tuples
$\fb$ such that $\gL' \subseteq V(\fb)$. This is a codimension
$\binom{\db + k}{k}$ subspace. Consider the incidence correspondence
\begin{equation*}
\xymatrix @C=0.5em{
&I = \{([\fb], [\gL]):  \gL \subseteq V(\fb) \}
\ar[ld]_{p} \ar[rd]^{q}
& \hspace{-10pt}\subseteq  \PP\, \Symdb \times \Gr(k,n)
\\
\PP\, \Symdb
&&\Gr(k,n).
}
\end{equation*}
The proof of Theorem \ref{thm:DM} involves showing that, under the
 conditions of part (2), $p$ is
dominant, and hence by the theorem of the
dimension of the fiber, there exists an open set $U \subset \PP \,
\Symdb$ over which all fibers---which are Fano schemes
$F_k(V(\fb))$---have the expected dimension $\del(n,\db,k)$. The
corollary will follow after showing that for generic $\gL' \in
\Gr(k,n)$, the intersection $\PP \,\SymL \cap U$ is non-empty, and
hence dense in $\PP \, \SymL$.

Set $\Gr = \Gr( \binom{\db + n}{n} - \binom{\db + k}{k} - 1, \binom{\db + n}{n}
- 1)$, so that $[\SymL] \in \Gr$.
Note that $p$ factors as follows: if $\tilde{I} = \{ ([\fb],
[S_{\gL'}]): \fb \in \SymL\} \subseteq \PP \, \Symdb \times \Gr$,
then 
\begin{equation*}
p: I \to \tilde{I}  \stackrel{\tilde{p}}{\to} \PP\, \Symdb
\end{equation*}
But dominance of $p$ implies that $\tilde{p}^{-1}(U)$ is dense in $\tilde{I}$, so for generic
$[\gL']$ it follows that $[\gL']$ is in some fiber of $\tilde{p}$ over
$U$, and hence $\PP \, \SymL \cap U \neq \emptyset$, as required.
\end{proof}

\noindent Next we show that the remaining cases where $\del(n,\db,k) <0$ result
in identifiability.
\begin{theorem}
\label{thm:KL}
Let $[\gL'] \in \Gr(k,n)$ be generic, and let $[\fb] \in \PP\, \Symdb$ be generic such that
$\gL' \subseteq V(\fb)$. If  $\del(n, \db,k)
< 0$, then $F_k(V(\fb)) = [\gL']$.
\end{theorem}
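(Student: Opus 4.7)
My plan is to argue by a dimension count on the incidence variety
\[
I' = \{([\fb], [\gL]) \in \PP\,\SymL \times \Gr(k,n) : [\gL] \neq [\gL'],\ \gL \subseteq V(\fb)\},
\]
with projections $p: I' \to \PP\,\SymL$ and $q: I' \to \Gr(k,n) \setminus \{[\gL']\}$. Showing $\dim I' < \dim \PP\,\SymL$ would prove $F_k(V(\fb)) = \{[\gL']\}$ set-theoretically for generic $\fb \in \SymL$. To upgrade to the scheme-theoretic equality $F_k(V(\fb)) = [\gL']$, I would separately observe that the tangent map
\[
H^0(\gL', N_{\gL'/\PP^n}) \longrightarrow \bigoplus_i H^0(\gL', \calO(d_i))
\]
induced by $\fb$ goes from a space of dimension $(k+1)(n-k)$ to one of dimension $\binom{\db+k}{k}$, and $\del(n,\db,k) < 0$ forces the former to be strictly smaller; hence for generic $\fb$ this map is injective, making $[\gL']$ a reduced isolated point of $F_k(V(\fb))$.

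To bound $\dim I'$, I would stratify the $\gL$-factor by $t := \dim(\gL \cap \gL')$ for $-1 \le t \le k-1$. The locus in $\Gr(k,n)$ of $\gL$ with $\dim(\gL \cap \gL') \ge t$ is a Schubert variety of dimension $(k+1)(n-k) - (t+1)(n - 2k + t)$. Over such $[\gL]$ the fiber of $q$ is $\PP(S_\gL(\db) \cap S_{\gL'}(\db))$, and its dimension can be extracted from the short exact sequence
\[
0 \to \calI_{\gL \cup \gL'}(d) \to \calI_\gL(d) \oplus \calI_{\gL'}(d) \to \calI_{\gL \cap \gL'}(d) \to 0,
\]
valid since $\calI_\gL + \calI_{\gL'} = \calI_{\gL \cap \gL'}$ for linear subspaces, combined with the standard vanishings $H^1(\calI_\gL(d)) = H^1(\calI_{\gL'}(d)) = 0$. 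This yields fiber dimension $\dim \PP\,\Symdb - 2\binom{\db+k}{k} + \binom{\db+t}{t}$. Using the identity $(k+1)(n-k) - (t+1)(n - 2k + t) = (k-t)(n - k + t + 1)$, the required bound for each stratum reduces to the combinatorial inequality
\[
(k-t)(n - k + t + 1) < \binom{\db+k}{k} - \binom{\db+t}{t}, \qquad -1 \le t \le k - 1.
\]

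This inequality is the main obstacle. At $t = -1$ it is exactly the hypothesis $\del(n, \db, k) < 0$; for $0 \le t \le k-1$ it is where $\db \neq (2)$ becomes essential. Setting $\phi(t) := \binom{\db+t}{t}/(t+1)$, a short algebraic manipulation rewrites the inequality as $\phi(k) - \phi(t) \ge k - t$, which I would deduce by telescoping the stepwise bound $\phi(t+1) - \phi(t) \ge 1$. A direct computation using $(t+1)\binom{d+t}{t+1} = d\binom{d+t}{t}$ gives
\[
\phi(t+1) - \phi(t) = \frac{\sum_i (d_i - 1)\binom{d_i + t}{t}}{(t+1)(t+2)},
\]
which, using $\binom{d_i + t}{t} \ge (t+1)(t+2)/2$ for $d_i \ge 2$, is at least $s/2$ when $s \ge 2$ and at least $(t+3)/3$ when $s = 1$ with $d_1 \ge 3$; the excluded case $\db = (2)$ gives exactly $1/2$, explaining the hypothesis $\db \neq (2)$. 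Finally, in the borderline cases where equality $\phi(k) - \phi(t) = k - t$ arises (namely $\db = (2,2)$ throughout, or $\db = (3)$ at $t = 0$), a brief rearrangement shows that the hypothesis $\del < 0$ and the target strict inequality cut out the same integer condition on $n$, so the implication still goes through.
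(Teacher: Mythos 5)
Your proposal is correct and follows essentially the same route as the paper: the same incidence correspondence stratified by $\dim(\gL\cap\gL')$, the same Schubert-cell and fiber dimension counts, and the same reduction to the negativity of $\del(n,\db,k,t)=(k-t)(n-k+t+1)+\binom{\db+t}{t}-\binom{\db+k}{k}$ for $-1\le t\le k-1$. The only divergence is the last combinatorial step, where the paper deduces the inequality from convexity of $t\mapsto\del(n,\db,k,t)$ (its second forward difference is $\binom{\db+t}{t+2}-2\ge 0$ exactly because $\db\neq(2)$) together with the boundary values $\del(n,\db,k,-1)=\del(n,\db,k)<0$ and $\del(n,\db,k,k)=0$, whereas you telescope $\phi(t+1)-\phi(t)\ge 1$; both arguments are valid and invoke $\db\neq(2)$ at the same point.
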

\begin{proof}
For the incidence correspondence $I$ above, the fibers of $q$ are all subspaces of the same dimension in $\PP
\Symdb$. In the situation of Theorem \ref{thm:KL}, however, the incidence
corresponence restricts to those $([\fb], [\gL])$ such that $\gL$ and
$\gL'$ are contained in $V(\fb)$, where $\gL'$ is fixed. The fibers of
$q$ are no longer projective spaces of the same dimension, but rather
their dimension depends on the intersection $\gL \cap \gL'$.

As in the proof of Corollary \ref{cor:delgeq0}, we denote the subpace
of $\PP \, \Symdb$ consisting of $[\fb] \in \PP \, \Symdb$ that vanish on
$\gL'$ by $\PP \,\SymL$. For $[\gL] \in \Gr(k,n)$ such that $\dim \gL \cap \gL' = k'$, $k'
\in \{-1, \ldots, k\}$, we can
choose a basis for $\CC^{n+1}$ such that $\gL = \PP \,\Sp(e_0, \ldots,
e_k)$ and $\gL' = \PP \,\Sp(e_{k-k'}, \ldots, e_{2k - k' })$.  If we replace the original incidence correspondence with $I_{k'} = \{
([\fb], [\gL]) : \gL', \gL \subseteq V(\fb), \dim \gL \cap \gL' =
{k'}\}$, then the non-empty fibers of $q$ over $\Gr(k,n)$ are now projective
spaces of a constant dimension, and the non-empty fibers of $p$ over
$\PP \,\SymL$ are the following subvarieties of $F_k(V(\fb))$,
\begin{equation*}
F_{k,{k'}} = \{ [\gL] \in F_k(V(\fb)): \dim \gL \cap \gL' = {k'} \}.
\end{equation*}

The natural description of the image of the second projection, $q$, is
via Schubert cells. Given a complete flag of projective subspaces $W_i \subseteq \PP^n$,
\begin{equation*}
\emptyset \subsetneq W_0 \subsetneq \ldots \subsetneq W_{n} = \PP^n
\end{equation*}
such that $\gL' = W_k$, and a non-increasing integral vector $\lambda
= ( \lambda_0,
\ldots, \lambda_{k})$ with $0 \leq \lambda_i \leq n-k$, the \emph{Schubert
  cell}\footnote{We use the less-common practice of defining
  Schubert cells via
  projective, rather than affine, dimension.} $\Sigma_\lambda \subset \Gr(k,n)$ is
\begin{equation*}
\Sigma_\lambda = \{[\gL]: \dim \gL \cap W_{n-k  + i -
  \lambda_i} \geq
i \textrm{ for all }i\}.
\end{equation*}
The codimension of $\Sigma_\lambda$ in $\Gr(k,n)$ is $\sum_{i=0}^{k}
\lambda_i$ (see \cite{MR507725} for further details). 

For example, if $n=6$, $k=1$, and ${k'}=0$, then $F_{1,0}(V(\fb))$ is the
intersection of $F_1((\fb))$ with the interior of $\Sigma_{4,0}$;
if instead $k=2$, then $F_{2,0}(V(\fb))$ is obtained by intersecting
with the interior of $\Sigma_{2, 0, 0}$; and if $k=2$ and ${k'}=1$,
then we intersect with the interior of $\Sigma_{3,3,0}$ to obtain
$F_{2,1}(V(\fb))$. In general, for $-1 \leq {k'} \leq k$,
\begin{equation*}
F_{k,{k'}}(V(\fb)) = F_k(V(\fb)) \cap \Sigma^\circ_{\underbrace{\scriptstyle{n-2k + {k'}, \ldots, n-2k +
  {k'}}}_\text{$({k'}+1)$-times}},
\end{equation*}
where we use the common abbreviation of omitting the components of
$\lambda$ equal 0.

We now write the incidence correspondence of interest as
\begin{equation*}
\xymatrix @C=0.5em{
&I_{k'} = \{([\fb], [\gL]):  \gL, \gL' \subseteq V(\fb), \, \dim \gL
\cap \gL' = k'\}
\ar[ld]_{p_{k'}} \ar[rd]^{q}
& \hspace{-10pt}\subseteq  \PP \,\SymL \times \oschu
\\
\PP \,\SymL
&&\oschu.
}
\end{equation*}
To prove Theorem
\ref{thm:KL}, we determine the expected
dimension of $F_{k,{k'}}$ for each ${k'} \in \{-1, \ldots, k\}$, and then
show that under the assumption $\del(n, \db, k) <0$, all expected
dimensions are negative except when $k' = k$, i.e. when $F_{k,k'} = [\gL']$.

The map $q$ is surjective with fibers projective spaces of the same dimension, so $I_{k'}$ is smooth
and irreducible of
codimension $\binom{\db + k}{k} - \binom{\db + k'}{k'}$ in $\PP \,\SymL \times \oschu$, and
\begin{align*}
\dim I_{k, k'} 
& =(k+1)(n-k) - (k'+1)(n-2k + k') \\
& \quad + \binom{\db + n}{n} - 2 \binom{\db +
  k}{k} + \binom{\db + k'}{k'} - 1.
\end{align*}
Subtracting $\dim I_{k'}$ from $\dim \PP \,\SymL$ implies that the expected
dimension of $F_{k,k'}$ is
\begin{equation}
\label{eq:delKL}
\del(n, \db, k,{k'}) = (k-{k'})(n-k+{k'}+1) + \binom{\db + {k'}}{{k'}} -
\binom{\db + k}{k}.
\end{equation}
Here we take $\binom{r}{-1} = 0$ for any $r \in \NN$.

To finish the proof, it suffices to show that all expected dimensions
are negative (except when $k' = k$, when the dimension is 0). We denote the first forward difference of $\del(n,\db, k, k')$ with respect to
$k'$ by $\Delta(k') = \del(n,\db,k, k' + 1) - \del(n,\db,k, k')$, and the
second forward difference by $\Delta^2(k') = \Delta(k'+1) - \Delta(k')$. Then
\begin{align*}
\Delta(k') &= -2 k' -n + 2k -2 + \binom{\db + k'}{k'+1}, \\
\Delta^2(k') & = -2 + \binom{\db + k'}{k' + 2}.
\end{align*}
Since we assume $\db \neq (2)$, $\Delta^2(k')$ is non-negative for
$k' \in \{-1, \ldots, k\}$, and hence $\del(n,\db, k, k')$ is convex
for these $k'$. Note further that $\del(n, \db, k, -1) = \del(n,\db, k)$, and $\del(n,
\db, k, k) = 0$.
By assumption,
$\del(n,\db, k) < 0$, so all expected dimensions except for $k'=k$ are
negative.

It follows that the map $p_{k'}$
cannot be dominant for $k' \in \{-1, \ldots, k-1\}$, and so the
variety $F_{k,k'}$ will be empty for generic $[\fb] \in \PP \,\SymL$. Since
$F_{k,k} = [\gL']$, this completes the proof.

\end{proof}

We conclude by studying the case $\delta(n, \db, k) = 0$. Replacing $\PP \, \Symdb$ by $\oschu$ for $k=-1$ in the proof of Theorem \ref{thm:DM} in
\cite{MR1654757} yields the following (for an expository account of this proof, see
\cite{FanoNotes}).
\begin{corollary}
Let $\gL' \in \Gr(k,n)$ be generic, and let $[\fb] \in \PP \, \Symdb$
be generic such that $\gL' \subseteq V(\fb)$. If $\del(n, \db, k)
= 0$, then $\dim F_{k, -1}(V(\fb)) = 0$.
\end{corollary}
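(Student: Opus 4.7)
My plan is to invoke the incidence correspondence analysis used in the proof of Theorem \ref{thm:KL}, specialized to $k' = -1$, and then to establish dominance of the first projection by an adaptation of the Debarre--Manivel argument for the $\del(n,\db,k) = 0$ case of Theorem \ref{thm:DM}. Define
\begin{equation*}
I_{-1} = \{([\fb],[\gL]) : \fb \in \SymL,\ \gL \subseteq V(\fb),\ \gL \cap \gL' = \emptyset\} \subseteq \PP\,\SymL \times U,
\end{equation*}
where $U \subseteq \Gr(k,n)$ is the open locus of $k$-planes disjoint from $\gL'$, non-empty since $\del = 0$ and $\db \neq (2)$ together force $n \geq 2k+1$. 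Because $\gL$ and $\gL'$ are disjoint for $[\gL] \in U$, the conditions $\gL \subseteq V(\fb)$ and $\gL' \subseteq V(\fb)$ are independent linear constraints on $\fb$, so the second projection $q : I_{-1} \to U$ has fibers of constant codimension $\binom{\db+k}{k}$ in $\PP\,\SymL$. Hence $I_{-1}$ is irreducible with
\begin{equation*}
\dim I_{-1} - \dim \PP\,\SymL = (k+1)(n-k) - \binom{\db+k}{k} = \del(n,\db,k) = 0.
\end{equation*}

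To conclude it suffices to show that $p_{-1} : I_{-1} \to \PP\,\SymL$ is dominant, as the fiber-dimension theorem then gives $\dim F_{k,-1}(V(\fb)) = 0$ generically. By upper semicontinuity of fiber dimension and irreducibility of $I_{-1}$, dominance follows from exhibiting a single pair $([\fb_0],[\gL_0]) \in I_{-1}$ at which $[\gL_0]$ is an isolated reduced point of $F_k(V(\fb_0))$---equivalently, at which the standard tangent map
\begin{equation*}
\mu_{\fb_0} : T_{[\gL_0]}\Gr(k,n) \longrightarrow \bigoplus_{i=1}^{s} H^0\bigl(\gL_0,\calO_{\gL_0}(d_i)\bigr),\quad v \mapsto (v \cdot f^{(i)}|_{\gL_0})_i,
\end{equation*}
is an isomorphism. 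Note that source and target both have dimension $(k+1)(n-k) = \binom{\db+k}{k}$ when $\del = 0$.

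The proof of Theorem \ref{thm:DM} in \cite{MR1654757} constructs such a $\mu_{\fb_0}$ with $\fb_0$ free in $\PP\,\Symdb$, and the main obstacle is to verify that $\fb_0$ can equally be chosen inside $\SymL$. Since $\mu_{\fb_0}$ depends only on the first-order jet of $\fb_0$ along $\gL_0$, this reduces to surjectivity of the jet-restriction map $\SymL \cap \{\fb \in \Symdb : \gL_0 \subseteq V(\fb)\} \to (I_{\gL_0}/I_{\gL_0}^2)_\db$, where $I_{\gL_0}$ denotes the homogeneous ideal of $\gL_0$. Disjointness $\gL_0 \cap \gL' = \emptyset$ makes this transparent: in adapted coordinates, representatives of $(I_{\gL_0}/I_{\gL_0}^2)_\db$ are monomials of multi-degree $\db$ having a single factor among the defining linear forms of $\gL_0$ and, because $d_i \geq 2$, at least one factor among the coordinates on $\gL_0$, which (in a suitable coordinate system) are themselves defining linear forms of $\gL'$ and hence vanish on $\gL'$. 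Thus the Debarre--Manivel jet is realizable inside $\SymL$, producing the required $([\fb_0],[\gL_0]) \in I_{-1}$ and establishing dominance of $p_{-1}$.
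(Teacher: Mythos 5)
Your argument is correct and follows essentially the same route as the paper, which likewise obtains this corollary by rerunning the Debarre--Manivel incidence-correspondence argument with the Grassmannian replaced by the open locus of $k$-planes disjoint from $\gL'$ and the base replaced by $\PP\,\SymL$. Your jet-surjectivity observation (that representatives of $(I_{\gL_0}/I_{\gL_0}^2)_{\db}$ automatically vanish on a $\gL'$ disjoint from $\gL_0$ because $d_i \geq 2$) is exactly the detail the paper leaves implicit, and it is verified correctly.
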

\noindent In particular, there exists in this case a $k$-plane distinct from $\gL'$
(in fact, disjoint from it) contained in $F_k(V(\fb))$. Combining with
Theorem \ref{thm:KL} will give a precise characterization of
identifiability in SSA.

\begin{corollary}
\label{cor:id1}
Let $\gL' \in \Gr(k,n)$ be generic, and let $[\fb] \in \PP \, \Symdb$
be generic such that $\gL' \subseteq V(\fb)$. Then $F_k(V(\fb)) = [\gL']$
if and only if $\del(n,\db, k) < 0$.
\end{corollary}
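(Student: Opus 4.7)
The plan is to prove the corollary by combining the three preceding results in the section, namely Theorem \ref{thm:KL}, Corollary \ref{cor:delgeq0}, and the unnumbered corollary just before the statement, which together cover all possible values of $\del(n, \db, k)$.

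For the ``if'' direction, I would simply cite Theorem \ref{thm:KL}: under the assumption $\del(n,\db,k) < 0$, that theorem directly asserts $F_k(V(\fb)) = [\gL']$, so there is nothing more to do.

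For the ``only if'' direction, I would argue by contraposition: assuming $\del(n,\db,k) \geq 0$, I would produce a point of $F_k(V(\fb))$ different from $[\gL']$ and thereby show the Fano scheme is strictly larger than $\{[\gL']\}$. Split into two subcases. If $\del(n,\db,k) > 0$, then Corollary \ref{cor:delgeq0} gives $\dim F_k(V(\fb)) = \del(n,\db,k) > 0$, so the Fano scheme is positive-dimensional and hence certainly not the single point $[\gL']$. If $\del(n,\db,k) = 0$, then the corollary immediately preceding the statement gives $\dim F_{k,-1}(V(\fb)) = 0$, i.e.\ there exists some $[\gL] \in F_k(V(\fb))$ with $\dim \gL \cap \gL' = -1$, and in particular $[\gL] \neq [\gL']$.

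There is no real obstacle here beyond assembling these pieces cleanly; the technical work has already been carried out in the proofs of Theorem \ref{thm:KL} and Corollary \ref{cor:delgeq0}, and the $\del = 0$ edge case is handled by the intermediate corollary. The only mild care needed is to make sure the genericity hypotheses on $\gL'$ and $\fb$ match across the three cited results, which they do since all three assume $[\gL'] \in \Gr(k,n)$ generic and $[\fb] \in \PP \, \Symdb$ generic subject to $\gL' \subseteq V(\fb)$.
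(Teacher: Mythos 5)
Your proposal is correct and follows essentially the same route the paper intends: the ``if'' direction is Theorem \ref{thm:KL}, and the ``only if'' direction splits $\del(n,\db,k) \geq 0$ into the positive-dimensional case (Corollary \ref{cor:delgeq0}) and the $\del = 0$ case, where the preceding corollary supplies a $k$-plane disjoint from $\gL'$ (note that $\dim F_{k,-1} = 0$ means nonempty under the paper's convention $\dim \emptyset = -1$). Nothing further is needed.
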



\subsection{Lower rank quadrics}
\label{sec:lowerRank}
We specialize now to intersections of quadrics,
with all quadrics vanishing on a common $k$-plane $\gL'$, and all with
rank $r$ (meaning their Gram matrices all have rank
$r$). We begin with a result that makes no assumption about a common
linear subspace. Since we only deal with quadrics, we set
\begin{equation*}
\del(n,s,k) = (k+1)(n-k) - s \binom{k+2}{2}.
\end{equation*}
\begin{proposition}
\label{prop:rank}
Let $\qb = (q^{(1)}, \ldots, q^{(s)})$, be generic homogeneous quadratic
forms of rank $r$. If $r \geq 2k +
2$ and
$\del(n,s,k) \geq 0$, then $F_k(V(\qb))$ has dimension $\del(n,s,k)$.
\end{proposition}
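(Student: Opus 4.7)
The plan is to mimic the incidence correspondence strategy of Theorem \ref{thm:DM}, but restricted to the rank-$r$ stratum of quadrics. Let $\calQ_r \subseteq \PP \, \Sym^2 \CC^{n+1}$ denote the irreducible quasi-projective variety of rank-$r$ quadrics, of dimension $\binom{n+2}{2} - \binom{n-r+2}{2} - 1$, and form
\[
I = \{(\qb, [\gL]) \in \calQ_r^s \times \Gr(k,n) : \gL \subseteq V(\qb)\}
\]
with projections $p: I \to \calQ_r^s$ and $q: I \to \Gr(k,n)$. The proposition will follow once (i) $\dim I = \del(n,s,k) + s \dim \calQ_r$, and (ii) $p$ is dominant, by the theorem of the dimension of the fiber.

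For (i), I would compute the fibers of $q$ directly. Fix coordinates with $\gL = V(x_{k+1}, \ldots, x_n)$; then any quadric $q$ vanishing on $\gL$ has Gram matrix
\[
M = \begin{pmatrix} 0 & B \\ B^T & C \end{pmatrix}
\]
with $B$ of size $(k+1) \times (n-k)$ and $C$ symmetric of size $(n-k) \times (n-k)$. A block row/column reduction, using the nonzero part of $B$ to clear the corresponding submatrix of $C$, shows $\rank M = 2 \rank B + \rank \bar C$, where $\bar C$ is a symmetric matrix of size $(n-k - \rank B)$ built from $C$. The hypothesis $r \geq 2k+2$ (together with $r \leq n+1$) forces $n-k \geq k+1$, so $B$ may attain the maximal row rank $k+1$. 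A parameter count across the strata indexed by $\rank B \in \{0, 1, \ldots, k+1\}$ shows that the stratum $\rank B = k+1$ dominates, yielding
\[
\dim \{q \in \calQ_r : \gL \subseteq V(q)\} = \dim \calQ_r - \binom{k+2}{2}.
\]
Since this is independent of $[\gL]$, $q$ is surjective with equidimensional fibers, so $\dim I = (k+1)(n-k) + s(\dim \calQ_r - \binom{k+2}{2}) = \del(n,s,k) + s \dim \calQ_r$.

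For (ii), I would exhibit a specific pair $(\qb_0, [\gL_0]) \in I$ at which the fiber of $p$ has the expected dimension $\del(n,s,k)$, equivalently at which $T_{[\gL_0]} F_k(V(\qb_0))$ has dimension $\del(n,s,k)$. A natural candidate is $\gL_0 = V(x_{k+1}, \ldots, x_n)$ together with each $q^{(i)}_0$ of a simple rank-$r$ form (for instance with $\rank B^{(i)} = k+1$ and $\bar C^{(i)}$ a generic rank-$(r-2k-2)$ symmetric matrix); the tangent space is then computed via the normal bundle sequence for $F_k$ and reduces to an explicit linear-algebraic rank computation.

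The main obstacle I anticipate is step (ii). Tangent vectors to $\calQ_r$ at a rank-$r$ quadric are not arbitrary symmetric perturbations but are constrained to preserve the rank bound, and tracking this constraint across all $s$ factors while comparing with the tangent directions in $\Gr(k,n)$ requires careful bookkeeping of the strata appearing in the fiber-of-$q$ analysis. Once a smooth point $[\gL_0] \in F_k(V(\qb_0))$ with tangent space of dimension $\del(n,s,k)$ is identified, dominance of $p$ is immediate and the theorem of the dimension of the fiber completes the proof.
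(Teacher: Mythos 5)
Your overall strategy---an incidence correspondence over the $s$-fold product of the rank-$r$ determinantal variety $\calQ_r$, with the fibers of $q$ computed via the block Gram matrix $\left(\begin{smallmatrix} 0 & B \\ B^T & C\end{smallmatrix}\right)$---is a legitimate alternative to the paper's argument, and your step (i) is essentially right: the rank-$r$ quadrics through a fixed $k$-plane do have codimension $\binom{k+2}{2}$ in $\calQ_r$, and $r \geq 2k+2$ is exactly what allows $\rank B = k+1$. But the proof is not complete. Step (ii), which you yourself flag as the main obstacle, is left as a proposal rather than carried out, and it is the entire content of the proposition: without an actual pair $(\qb_0,[\gL_0])$ with $\dim T_{[\gL_0]}F_k(V(\qb_0)) = \del(n,s,k)$ you get neither dominance of $p$ nor the upper bound on the generic fiber, so no part of the conclusion follows. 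Your stated worry about this step---that tangent vectors to $\calQ_r$ are constrained to preserve the rank---is moreover a red herring: the fiber $p^{-1}(\qb_0)$ is just $F_k(V(\qb_0)) \subseteq \Gr(k,n)$, so its tangent space involves no tangent directions on $\calQ_r$ at all.

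The missing idea, which is the heart of the paper's proof, is already latent in your own block decomposition. The linear equations cutting out $T_{[\gL_0]}F_k(V(\qb))$ inside $T_{[\gL_0]}\Gr(k,n)$ involve only the Gram entries $\qi_{u,v}$ with $0 \leq u \leq k < v \leq n$, i.e.\ only the blocks $B^{(i)}$. Because $r \geq 2k+2$, generic blocks $B^{(i)}$ (of full rank $k+1$, hence forcing $\rank \geq 2k+2$) can be completed by suitable $C^{(i)}$ to Gram matrices of rank exactly $r$ vanishing on $\gL_0$; equivalently, in the paper's coordinates on the rank-$\leq r$ locus, the rank condition only constrains the bottom-right $(n+1-r)\times(n+1-r)$ corner of the Gram matrix, which is disjoint from the entries appearing in the tangent-space equations. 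Hence the full-rank statement for generic unconstrained $\qb$ (Theorem \ref{thm:DM}) transfers verbatim to generic $\qb \in \calQ_r^s$ through $\gL_0$, which supplies your step (ii); together with irreducibility of $\calQ_r$ this finishes the proof. The paper in fact dispenses with the incidence correspondence over $\calQ_r^s$ entirely and argues directly from this tangent-space observation, but either packaging works once the observation is made explicit. As written, your proposal correctly identifies the obstacle but does not overcome it.
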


\begin{proof}
An immediate corollary of Theorem 2.1 of \cite{MR1654757} (Theorem \ref{thm:DM} above) is that for a $k$-plane $\gL_0 \in
F_k(V(\qb))$, the Jacobian matrix defining the tangent space
$T_{[\gL_0]} F_k(V(\qb))$ is full-rank.
Choose coordinates such that $\gL_0 =
\Sp(e_0, \ldots, e_k)$, and consider the coordinate patch $U_0$ of the
Grassmannian $\Gr(k,n)$ centered at $\gL_0$ with coordinates
$\{x_{a,b}: 0 \leq a \leq n, k+1\leq b \leq n\}$, and corresponding
tangent space coordinates denoted by $X_{a,b}$. For each $i \in \{1,
\ldots, s\}$, let $\qi_{u,v}$, where $0 \leq u \leq v \leq n$, be
coordinates for the vector space of homogeneous quadratic polynomials
(i.e. the entries of the quadratic form's Gram matrix lying on or
above the diagonal---see Figure \ref{fig:gram} for an example). The defining equations for the tangent space of
$F_k(V(\qb))$ at $[\gL_0]$ are 
\begin{align}
\label{eq:tanEqu}
& \qi_{u,k+1} X_{u,k+1} + \ldots + \qi_{u,n} X_{u,n} = 0,\\
\begin{split}
\label{eq:tanEquv}
& \qi_{u,k+1} X_{v,k+1} + \ldots + \qi_{u,n} X_{v,n} \\
& + \qi_{v,k+1} X_{u,k+1} + \ldots + \qi_{v,n} X_{u,n} = 0,
\end{split}
\end{align}
for all $i \in \{1, \ldots, s\}$ and $u,v \in \{0, \ldots, k\}$ with $u<v$.
The first main ingredient of the proof is that these equations depend
only on the coordinates $\qi_{u,v}$ satisfying $0 \leq u \leq k < v
\leq n$.

The second ingredient is a particular choice of coordinates for
quadrics of rank at most $r$, given by all coordinate functions $\qi_{u,v}$
except for those appearing in the bottom right $(n+1-r) \times
(n+1-r)$ corner of the quadric's Gram matrix (see Figure \ref{fig:gram}). Note that, by our
assumption on $r$, these excluded entries are never among the
coordinate functions appearing in the defining equations of the
tangent space (\ref{eq:tanEqu})--(\ref{eq:tanEquv}). For each $i \in
\{1, \ldots, s\}$, these coordinates
are valid on an open subset of $\Gamma_{n+1}(2)$, which we denote by
$U_r$.  We next show  that $U_r \cap S_{\gL_0}$ is non-empty.

To prove this claim, let $\Qi$ be
the Gram matrix of the quadratic form $\qi$. By assumption all $(r+1)\times (r+1)$ minors of $\Qi$
vanish. The resulting equations enable us to write the omitted
coordinate functions as rational expressions in the remaining
coordinate functions, and the locus in $\Gamma_{n+1}(2)$ where the
denominators of these rational expressions do not vanish is precisely
the open set $U_r$ where these coordinates are valid.

Specifically, fix $u,v$ with $r \leq u \leq v \leq n$. Let $\Qi_{uv}$ be the
$(r+1) \times (r+1)$ submatrix obtained by deleting the rows indexed by $\{r, \ldots, n\}
\setminus \{u\}$ and the columns indexed by $\{r, \ldots, n\}
\setminus \{v\}$. The corresponding minor is
\begin{align*}
0=\det \Qi_{uv} &=
\sum_{\sigma \in \Sym(\{0, \ldots, r-1, u\}, \{0,
  \ldots, r-1, v\})} (-1)^\sigma \prod_{k \in \{0, \ldots, r-1, u\}}
\qi_{k, \sigma(k)}
\\
& = \qi_{uv} \sum_{\sigma \in \Sym(\{0, \ldots, r-1\}, \{0, \ldots,
  r-1\})}(-1)^\sigma \sum_{k \in \{0, \ldots, r-1\}} \qi_{k \sigma(k)}
\\
& \quad +\sum_{\substack{\sigma \in \Sym(\{0, \ldots, r-1, u\}, \{0,
  \ldots, r-1, v\}), \\ \sigma(u) \neq v}} (-1)^\sigma \prod_{k \in
\{0, \ldots, r-1, u\}} \qi_{k \sigma(k)},
\end{align*}
which gives the promised rational expression for $\qi_{u,v}$, with
denominator equal to the top-left $r\times r$ minor of $\Qi$, so that
$U_r$ is the complement of the vanishing locus of this minor. Now it is easy to see that $U_r \cap S_{\gL_0}$ is non-empty, since the
quadric with Gram matrix having entries $\qi_{u,v} = 1$ if $u+v = r$ and
$\qi_{u,v} = 0$ otherwise lies in this intersection. 

To finish the proof, Theorem \ref{thm:DM} guarantees that
for generic $\qb = (q^{(1)}, \ldots, q^{(s)})$, the Fano scheme $F_k(V(\qb))$
is of the expected dimension, equivalently, the equations
(\ref{eq:tanEqu})--(\ref{eq:tanEquv}) are full-rank. By the observation
following these equations, it therefore follows that for generic
coordinate choices
$\qi_{u,v}$, $i =1, \ldots, s$ with $0 \leq u \leq k < v \leq n$,
equations (\ref{eq:tanEqu})--(\ref{eq:tanEquv}) are full-rank. Hence a generic intersection of
quadrics of rank at most $r$ that lies in the coordinate patch
described above is also in the open set where the dimension of
$F_k(V(\qb))$ matches the expected dimension. Since the variety
parametrizing symmetric matrices of rank $r$ is irreducible,
the result follows.

\begin{figure}
\label{fig:gram}
\begin{centering}
\psset{unit=0.7cm, linewidth=0.75pt}
 \begin{pspicture}(-2,-2)(2,2)
$Q^{(i)} = \left(\begin{array}{cc|ccccc}
0 & 0 & \qi_{0,2} & \qi_{0,3} & \qi_{0,4} & \qi_{0,5}\\
0&0 & \qi_{1,2} & \qi_{1,3} & \qi_{1,4} & \qi_{1,5} \\ \hline
\qi_{0,2}& \qi_{1,2}& \qi_{2,2} & \qi_{2,3} & \qi_{2,4} & \qi_{2,5} \\
\qi_{0,3}& \qi_{1,3} & \qi_{2,3}&  \qi_{3,3} &
 \qi_{3,4} &  \qi_{3,5} \\
\qi_{0,4} &\qi_{1,4} & \qi_{2,4} &  \qi_{3,4}
& \cellcolor{green!40} \qi_{4,4} & \cellcolor{green!40} \qi_{4,5}\\
\qi_{0,5} & \qi_{1,5} & \qi_{2,5}& \qi_{3,5} &\cellcolor{green!40} \qi_{4,5}
& \cellcolor{green!90} \qi_{5,5}
\end{array}\right)$
\end{pspicture}
\caption{Gram matrix of a quadric vanishing on $\gL_0$ for $k=1$,
  $n=5$. The excluded coordinate functions for $r=4,5$ are depicted in
  increasing shades of green. }
\end{centering}
\end{figure}
\end{proof} 

The proofs of Corollary \ref{cor:delgeq0} and Theorem \ref{thm:KL} require only notational
changes to extend the above arguments to conditional genericity, where the
additional condition is containment of a
fixed $k$-plane $\gL'$.
\begin{corollary}
\label{cor:lowerRank1}
Let $\gL' \in \Gr(k,n)$ be generic, and let $\qb = (q^{(1)}, \ldots,
q^{(s)})$ be generic homogeneous quadratic forms of rank $r$ such that
$\gL' \subseteq V(\qb)$ and $r \geq 2k+ 2$.
\begin{enumerate}
\item If $\del(n,s,k) \geq 0$, then $\dim
  F_k(V(\qb)) = \del(n,s,k)$.
\item If $\del(n,s,k) <0 $, then $F_k(V(\qb)) = [\gL']$. 
\end{enumerate}
\end{corollary}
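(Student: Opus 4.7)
The plan is to mirror the arguments from the proofs of Corollary \ref{cor:delgeq0} and Theorem \ref{thm:KL}, substituting Proposition \ref{prop:rank} for Theorem \ref{thm:DM} as the input dimension statement, and replacing $\PP\,\Symdb$ with the irreducible projective variety $R_r^s$ of $s$-tuples of rank $r$ quadrics. The role previously played by $\PP\,\SymL$ is taken over by the subvariety $R_r^s(\gL') \subseteq R_r^s$ of tuples $\qb$ with $\gL' \subseteq V(\qb)$.

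For part (1), I would form the incidence correspondence
\begin{equation*}
I = \{(\qb, [\gL]) \in R_r^s \times \Gr(k,n) : \gL \subseteq V(\qb)\}
\end{equation*}
with projections $p$ and $q$. Proposition \ref{prop:rank} supplies, under $\del(n,s,k) \geq 0$ and $r \geq 2k+2$, an open $U \subseteq R_r^s$ over which $p$ has fibers of the expected dimension $\del(n,s,k)$; in particular $p$ is dominant. It then remains to show $R_r^s(\gL') \cap U \neq \emptyset$ for generic $[\gL']$, which I would verify by the same two-step factoring of $p$ used in Corollary \ref{cor:delgeq0}, this time passing through the auxiliary incidence that records the locus of rank-$r$ tuples vanishing on $\gL'$.

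For part (2), I would stratify $F_k(V(\qb))$ by $k' = \dim(\gL \cap \gL') \in \{-1, \ldots, k\}$ and work with the restricted incidence
\begin{equation*}
I_{k'} = \{(\qb, [\gL]) \in R_r^s(\gL') \times \oschu : \gL, \gL' \subseteq V(\qb),\; \dim \gL \cap \gL' = k'\},
\end{equation*}
indexed by the same Schubert cells as in Theorem \ref{thm:KL}. Choosing coordinates so that $\gL = \Sp(e_0,\ldots,e_k)$ and $\gL' = \Sp(e_{k-k'},\ldots,e_{2k-k'})$, the hypothesis $r \geq 2k+2$ ensures that the bottom-right $(n+1-r)\times(n+1-r)$ block of coordinates used in Proposition \ref{prop:rank} to impose the rank condition is disjoint from the coordinates $\qi_{u,v}$ with $0 \leq u,v \leq 2k-k'$ that encode the vanishing of $\qb$ on $\gL \cup \gL'$. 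Consequently the rank contribution to the dimension count is constant on each stratum and cancels when one subtracts $\dim I_{k'}$ from $\dim R_r^s(\gL')$, so the expected dimension of $F_{k,k'}$ agrees with the formula \eqref{eq:delKL} specialised to the quadric multidegree $(2,\ldots,2)$. The convexity argument from the proof of Theorem \ref{thm:KL} then forces all these expected dimensions to be negative under $\del(n,s,k) < 0$, except at $k' = k$, where $F_{k,k} = [\gL']$.

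The main obstacle, concealed by the phrase ``notational changes,'' is the careful verification of two points. First, that $q : I_{k'} \to \oschu$ remains surjective with fibers of constant projective dimension on each stratum: this rests on the irreducibility of the variety of rank $r$ symmetric $(n+1)\times(n+1)$ matrices vanishing on a prescribed pair of subspaces, and the hypothesis $r \geq 2k+2$ is used precisely to decouple the rank condition from those linear vanishing conditions. Second, that $\dim R_r^s(\gL') = \dim R_r^s - s\binom{k+2}{2}$, so that the subtraction reproduces $\del(n,s,k,k')$ without spurious rank-dependent terms. Once these are established, the rest is formal bookkeeping.
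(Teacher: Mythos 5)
Your proposal is correct and follows exactly the route the paper intends: the paper's own ``proof'' is the one-line remark that the arguments of Corollary \ref{cor:delgeq0} and Theorem \ref{thm:KL} carry over with notational changes, using Proposition \ref{prop:rank} in place of Theorem \ref{thm:DM}, and you have simply spelled out those changes. Your identification of the two points that actually need checking---constancy and irreducibility of the fibers of $q$ over each Schubert stratum, and the clean codimension count for the vanishing conditions on the rank-$r$ locus, both secured by the decoupling coming from $r \geq 2k+2$---is precisely the content hidden behind the paper's phrase ``notational changes.''
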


A direct extension of Corollary \ref{cor:id1} now leads to a
precise determination of an identifiability criterion for SSA.
\begin{corollary}
\label{cor:id2}
Under the assumptions of Corollary \ref{cor:lowerRank1}, $F_k(V(\qb) =
[\gL']$ if and only if $\del(n,s,k) < 0$. 
\end{corollary}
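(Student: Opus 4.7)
The plan is to mirror the proof of Corollary \ref{cor:id1} almost verbatim, since Corollary \ref{cor:lowerRank1} supplies for the rank-$r$ setting exactly the two statements (dimension for $\delta\ge 0$, identifiability for $\delta<0$) that were used there. The only subtlety is the borderline case $\delta(n,s,k) = 0$, which must be handled separately to promote a dimension statement into a scheme-theoretic equality.

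First I would dispatch the easy direction. If $\delta(n,s,k) < 0$, then part (2) of Corollary \ref{cor:lowerRank1} immediately yields $F_k(V(\qb)) = [\gL']$, so the ``if'' direction is free.

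For the converse, assume $\delta(n,s,k) \geq 0$ and show $F_k(V(\qb)) \neq [\gL']$. When $\delta(n,s,k) > 0$, part (1) of Corollary \ref{cor:lowerRank1} gives $\dim F_k(V(\qb)) = \delta(n,s,k) > 0$, and a positive-dimensional scheme cannot equal the single reduced point $[\gL']$. The main obstacle is the boundary case $\delta(n,s,k) = 0$: here the Fano scheme is zero-dimensional, hence consists of finitely many points, and we must rule out the possibility that $[\gL']$ is the only one. This is the analogue of the unnumbered corollary preceding Corollary \ref{cor:id1}, and the proof there (exhibiting a $k$-plane disjoint from $\gL'$ in $F_k(V(\fb))$ when $\delta = 0$) is what I would imitate.

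Concretely, I would replicate the argument by running the incidence correspondence of Theorem \ref{thm:KL} inside the Schubert stratum $\Sigma^\circ_\lambda$ corresponding to $k' = -1$ (i.e., $k$-planes disjoint from $\gL'$), but now restricted to quadrics of rank $r$ lying in $\SymL$. The rank-$r$ genericity is handled precisely as in the proof of Proposition \ref{prop:rank}: the hypothesis $r \geq 2k+2$ ensures that none of the ``excluded'' entries in the Gram matrix of any $\qi$ appear in the tangent-space equations (\ref{eq:tanEqu})--(\ref{eq:tanEquv}) at a $k$-plane, so that all dimension counts carry over from the full-rank case to the rank-$r$ stratum. Combined with the computation (\ref{eq:delKL}) and the convexity of $\delta(n,\db,k,k')$ in $k'$ observed in the proof of Theorem \ref{thm:KL}, we see that for $\delta(n,s,k) = 0$ the expected dimension of $F_{k,-1}(V(\qb))$ is exactly $0$ and the corresponding projection $p_{-1}$ is dominant, so a $k$-plane disjoint from $\gL'$ exists in $F_k(V(\qb))$, contradicting $F_k(V(\qb)) = [\gL']$. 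I expect the only real bookkeeping to be verifying that the rank-$r$ restriction does not shrink the admissible locus below the dominance threshold, which is exactly the content of the ``$U_r \cap S_{\gL_0}$ is non-empty'' step already established in Proposition \ref{prop:rank}.
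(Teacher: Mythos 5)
Your proposal is correct and follows essentially the same route the paper intends: the paper presents Corollary \ref{cor:id2} as a direct extension of Corollary \ref{cor:id1}, i.e.\ part (2) of Corollary \ref{cor:lowerRank1} for the ``if'' direction, part (1) for $\del(n,s,k)>0$, and a rank-$r$ analogue of the unnumbered $\del=0$ corollary (obtained via the coordinate argument of Proposition \ref{prop:rank}) to exhibit a $k$-plane disjoint from $\gL'$ in the borderline case. You have correctly identified the one genuine point needing care---promoting the zero-dimensionality statement at $\del=0$ into the existence of a second $k$-plane---and handled it as the paper does.
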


It would be interesting to extend these lower-rank results to other
multidegrees. For our application to machine learning, this
generalization would eliminate the need to assume that only knowledge
of the first two cumulants is available (see Section \ref{sec:app}). There are however two significant obstacles to
extending the techniques used here. First, it is not
known in general if the variety parametrizing lower rank symmetric tensors of
degree $\geq 3$ is irreducible. And second, coordinates like the ones
used for quadrics are not known in general.
\section{Application to stationary subspace analysis}
\label{sec:app}
In this section, we show how the preceding results can be applied in
machine learning. A central task in multivariate time series analysis is to separate out
data coming from different sources, e.g. filtering out noisy data, or
identifying a time-stationary data source from time-varying
sources. A recent method for this second task is
\emph{stationary subspace analysis} (SSA)
\cite{bunmeikirmul09finding}. 

Let $\{X_0^{(1)}, \ldots, X_0^{(m_1)}, \ldots, X_s^{(1)}, \ldots, X_s^{(m_s)}\}$ be
time series data separated into $s+1$ consecutive
epochs, with $X_i^j \in \CC^{n+1}$ for all $i,j$. Each epoch is modeled as a random variable $X_i$. A central
assumption of SSA is that
these data come from a linear superposition of a $k+1$ dimensional stationary signal, and 
an $n-k$ dimensional non-stationary signal. The task of identifying the
stationary signal is now equivalent to finding a projection matrix $P
\in \CC^{(k+1)\times(n+1)}$
such that $P  X_0, \ldots, P  X_s$ are identically
distributed, i.e.
\begin{equation}
\label{eq:proj}
P X_0\sim P X_1\sim  \dots\sim P X_s.
\end{equation}
To make this problem well-defined, we must further assume
that the data $X_i^{j}$--and hence the random variables $X_i$---are general under the assumption that such a projection $P$ exists. An important problem in SSA is to determine the minimal number  $N$ of epochs required to uniquely determine $P$.

The connection to algebraic geometry and Fano schemes arises through
\emph{cumulants}. Let $X$ be a random variable taking values in
$\CC^{n+1}$, then the $d$th cumulant of $X$ is a symmetric tensor of
degree $d$. For example, the first cumulant is the mean vector of $X$, and the
second cumulant is the covariance matrix of $X$. In general, the
$d$th cumulant of $X$ can be represented as a homogeneous polynomial in $n+1$
variables of degree $d$ (see e.g. Appendix 2 of \cite{MR1322960}). For $z
= (z_0, \ldots, z_n)$, we define the $d$th \emph{cumulant polynomial}
$\kappa_{X,d}(z)$ of $X$ by the generating function
\begin{equation*}
g_X(z) = \log \mathbb{E}(\exp (z \cdot X)) = \sum_{d=0}^\infty \kappa_{X,d}(z).
\end{equation*}
If the cumulant generating functions of two random
variables  have
finite radii of convergence, then by taking Fourier transorms it can
be shown that these two random variables are identically distributed if and only if their
cumulants coincide, just as with random variables taking values
in $\CC$. Hence the defining property of
$P$ from Equation (\ref{eq:proj}) translates into an \emph{a priori} infinite
system of homogeneous algebraic equations,
\begin{equation}
\label{eq:kappaP}
P \circ \kappa_{i,d}(z)  = P \circ \kappa_{j,d}(z),
\end{equation}
for all $d \geq 0$ and all $0 \leq i < j \leq s$, where we have
abbreviated $\kappa_{X_i,d}(z)$ by $\kappa_{i,d}(z)$, and the action
of $P$ on
$\kappa_{i,d}(z)$ is induced from its action on $\CC^{n+1}$. We refer to Section
2 of \cite{MR2913722} for more details. To avoid an infinite number of
equations, in SSA it is assumed that the stationary and non-stationary
signals can be separated from one another by considering only the first two
cumulants.

To connect Equations (\ref{eq:kappaP}) to Fano schemes, let $\gL'
\subseteq \CC^{n+1}$ be
the row-span of the matrix $P$. For $1 \leq i  \leq s$, set $f^{(i)} =
\kappa_{i,d} - \kappa_{0,d} \in \Gamma(d)$. Then Equations
(\ref{eq:kappaP}) are satisfied if and only if all polynomials $f^{(i)}$
vanish identically on $\gL'$, which is equivalent to the condition
$[\gL'] \in F_k(V(f^{(1)}, \ldots, f^{(s)}))$. In case $d=1$, the
resulting equations are linear, so by the genericity assumption on the
$X_i$, these equations only reduce the ambient dimension. Hence
Fano schemes of intersections of quadrics is the main case of interest
for SSA. Since the number of epochs tends to be large, the case of two
epochs---i.e. a single quadric---can be reasonably omitted.

The specific application of our results on Fano schemes to SSA is to
give a precise characterization of the number of epochs required to
uniquely identify the projection $P$. An upper bound for the identifiability of the SSA problem was proven in the Appendix of \cite{MR2913722}:
\begin{theorem}
\label{thm:app}
Let $X_0, \ldots, X_s$ be generic random variables in $\CC^{n+1}$
conditional upon the existence of a projection $P \in \CC^{(k+1)
  \times (n+1)}$ such that the first two cumulants of all $P X_i$ coincide. Then $P$ is uniquely
identifiable if $s \geq (n+2)/k$.
\end{theorem}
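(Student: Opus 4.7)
The plan is to translate the statement into the Fano-scheme framework of Section~\ref{sec:main} and invoke Corollary~\ref{cor:id2}. First, I would let $\gL'\subseteq\PP^n$ denote the row span of $P$, and for $1\leq i\leq s$ set $f^{(i)}=\kappa_{i,1}-\kappa_{0,1}$ and $q^{(i)}=\kappa_{i,2}-\kappa_{0,2}$. By the dictionary developed immediately before the theorem, unique identifiability of $P$ is equivalent to $F_k(V(\fb,\qb))=[\gL']$, with $\fb=(f^{(1)},\dots,f^{(s)})$ and $\qb=(q^{(1)},\dots,q^{(s)})$; the genericity assumption on $X_0,\dots,X_s$ conditional on the existence of $P$ ensures that $\fb$ and $\qb$ are generic within the subspaces of linear forms and quadrics vanishing on $\gL'$.

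Next I would use the linear forms to reduce the ambient dimension. Assuming $s\leq n-k$, the generic $f^{(i)}\in (\gL')^\perp$ are linearly independent, and their common zero locus is a projective subspace $L\subseteq\PP^n$ of dimension $n-s$ containing $\gL'$. The Fano scheme in question then identifies with $F_k(V(\qb|_L))\subseteq\Gr(k,n-s)$, where the restricted quadrics $\qb|_L$ are generic among quadrics in $\PP^{n-s}$ vanishing on $\gL'$. Applying Corollary~\ref{cor:id2} (whose full-rank hypothesis holds for generic quadrics provided $n-s$ is sufficiently large) yields identifiability if and only if $\del(n-s,s,k)<0$, which simplifies to $s(k+4)>2(n-k)$.

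The main obstacle is verifying that the hypothesis $s\geq(n+2)/k$ of the theorem implies this Fano-scheme inequality. A short calculation reduces the implication to $n(k-4)\leq 2(k^2+k+4)$, which always holds for $k\leq 4$ but may fail once $k\geq 5$ and $n$ is large. In those outlying regimes the Fano-scheme argument above does not suffice and one must fall back on the original dimension-counting proof of \cite{MR2913722}; in every regime where the implication does hold, the Fano-scheme bound $s>2(n-k)/(k+4)$ is at least as strong as the classical threshold, which is exactly the identifiability improvement that Section~\ref{sec:main} delivers through Corollaries~\ref{cor:id1} and~\ref{cor:id2}.
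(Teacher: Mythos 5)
The first thing to note is that the paper does not prove Theorem \ref{thm:app} at all: it is imported verbatim from the Appendix of \cite{MR2913722}, where it is established by a dimension count that does not pass through Fano schemes; the paper's own contribution in this direction is the sharper Theorem \ref{thm:app2}, proved via Corollary \ref{cor:id1}. So the only question is whether your Fano-scheme route actually delivers the stated bound $s \geq (n+2)/k$. It does not, and you say so yourself: your reduction (cut $\PP^n$ down to $L \cong \PP^{n-s}$ by the $s$ generic linear forms $\kappa_{i,1}-\kappa_{0,1}$, then apply the identifiability criterion $\del(n-s,s,k)<0$, i.e. $s(k+4)>2(n-k)$) is implied by $s \geq (n+2)/k$ only when $n(k-4) \leq 2(k^2+k+4)$. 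For $k \geq 5$ and $n$ large the implication fails, and at that point you propose to ``fall back on the original dimension-counting proof of \cite{MR2913722}'' --- but that is the proof of the very statement you were asked to establish, so the argument is circular exactly where it is needed. As written, this proves a different sufficient condition, not Theorem \ref{thm:app}.

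Two further points. The corollary you invoke should be Corollary \ref{cor:id1} rather than Corollary \ref{cor:id2}: the restricted quadrics $\qb|_L$ are generic among quadrics on $L$ vanishing on $\gL'$, hence of full rank, and Corollary \ref{cor:id2} is the rank-deficient variant (to use it you would also need $n-s+1 \geq 2k+2$, which fails for $s$ close to $n-k$). More importantly, the tension your computation uncovers is real and should be confronted rather than waved away: your threshold $s > 2(n-k)/(k+4)$ is \emph{weaker} than the bound $s \geq 2(n-k)/(k+1)$ that Theorem \ref{thm:app2} asserts to be sharp, and for $k \geq 3$ and $n$ large even the hypothesis $s \geq (n+2)/k$ of Theorem \ref{thm:app} falls below that ``sharp'' threshold. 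Either the sharpness statement does not account for the first-cumulant information the way your reduction does, or some bookkeeping (yours or the paper's) is off; a correct treatment must resolve this rather than assert that the Fano-scheme bound ``is at least as strong as the classical threshold,'' which your own inequality shows is false in general.
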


\noindent The results from Section \ref{sec:main} sharpen the above result and completely solve the SSA identifiability problem for the case of the first two cumulants:
\begin{theorem}
\label{thm:app2}
Let $X_0, \ldots, X_s$ be generic random variables in $\CC^{n+1}$
conditional upon the existence of a projection $P \in \CC^{(k+1)
  \times (n+1)}$ such that the first two cumulants of all $P X_i$ coincide. Then $P$ is uniquely
identifiable if $s \geq 2(n-k)/(k+1)$, and this bound is sharp.
\end{theorem}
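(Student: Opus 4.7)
The plan is to translate the SSA identifiability problem directly into the Fano-scheme language of Section~\ref{sec:main} and then invoke Corollary~\ref{cor:id2}. Setting $\gL' = \mathrm{rowsp}(P) \in \Gr(k,n)$ and, for $i = 1,\ldots,s$, defining the quadric $q^{(i)} = \kappa_{X_i,2} - \kappa_{X_0,2} \in \Gamma_{n+1}(2)$, the discussion surrounding Equation~(\ref{eq:kappaP}) above already shows that the equality of second cumulants $P\kappa_{X_i,2} = P\kappa_{X_0,2}$ is equivalent to $\gL' \subseteq V(q^{(i)})$. Hence, writing $\qb = (q^{(1)},\ldots,q^{(s)})$, the projection $P$ is uniquely identifiable exactly when $F_k(V(\qb)) = [\gL']$.

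To apply Corollary~\ref{cor:id2}, I need to check that $\gL'$ is generic in $\Gr(k,n)$, that $\qb$ is generic among $s$-tuples of quadrics vanishing on $\gL'$, and that each $q^{(i)}$ has rank $r \geq 2k+2$. The first two conditions are direct consequences of the conditional genericity hypothesis on the $X_i$. For the rank condition, I choose coordinates so that $\gL' = \Sp(e_0,\ldots,e_k)$, putting the Gram matrix of $q^{(i)}$ in block form $\begin{pmatrix} 0 & A \\ A^T & B \end{pmatrix}$; a short kernel analysis---writing $v = (v_1;v_2)$ and solving $A v_2 = 0$ together with $A^T v_1 + B v_2 = 0$ for generic $A$ and $B$---shows that the only solution is $v = 0$ provided $n \geq 2k+1$, so generically $r = n+1 \geq 2k+2$.

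With the hypotheses verified, Corollary~\ref{cor:id2} yields $F_k(V(\qb)) = [\gL']$ if and only if $\del(n,s,k) < 0$. Expanding $\del(n,s,k) = (k+1)(n-k) - s(k+1)(k+2)/2$, this inequality simplifies to $s(k+2) > 2(n-k)$. Since $k+1 < k+2$, the hypothesis $s \geq 2(n-k)/(k+1)$ implies the strict inequality, so identifiability follows. The sharpness assertion is inherited from the ``only if'' direction of Corollary~\ref{cor:id2}: in the regime where $\del(n,s,k) \geq 0$, Corollary~\ref{cor:delgeq0} together with the subsequent remark (at $\del = 0$ there is a $k$-plane disjoint from $\gL'$) guarantees that $F_k(V(\qb))$ strictly contains $[\gL']$, so $P$ is not uniquely identifiable.

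The main obstacle in this plan is the generic full-rank calculation in the second step; the rest is essentially a dictionary between the SSA setup and the Fano-scheme results of Section~\ref{sec:main}.
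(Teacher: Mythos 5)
Your reduction to Corollary \ref{cor:id2} and the verification of its hypotheses (genericity of $\gL'$ and of $\qb$ among quadrics vanishing on $\gL'$, plus the block-matrix computation showing that a generic quadric containing a $k$-plane has full rank when $n \geq 2k+1$) are sound, and the sufficiency direction is essentially the route the paper intends: the paper offers no explicit proof of this theorem beyond pointing to Section \ref{sec:main}, and your dictionary between Equation (\ref{eq:kappaP}) and the condition $F_k(V(\qb)) = [\gL']$ is exactly that argument. Two small remarks on this half: Corollary \ref{cor:id1} applies with no rank hypothesis at all, so the detour through Corollary \ref{cor:id2} (which silently costs you the assumption $n \geq 2k+1$; for $n < 2k+1$ the generic rank is only $2(n-k) < 2k+2$) is unnecessary; and one should also note $s \geq 2$ so that the excluded case $\db = (2)$ does not arise.

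The sharpness paragraph, however, contains a genuine gap --- indeed your own computation contradicts the claim you draw from it. Corollary \ref{cor:id2} gives $F_k(V(\qb)) = [\gL']$ \emph{if and only if} $\del(n,s,k) < 0$, i.e. if and only if $s(k+2) > 2(n-k)$. The ``only if'' direction therefore certifies sharpness of the threshold $2(n-k)/(k+2)$, not of $2(n-k)/(k+1)$. Whenever an integer $s$ satisfies $2(n-k)/(k+2) < s < 2(n-k)/(k+1)$ (e.g. $k=1$, $n=7$, $s=5$), identifiability holds although the stated bound fails, so the stated bound is sufficient but not necessary; your sentence ``the sharpness assertion is inherited'' asserts the opposite of what the displayed criterion says. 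Moreover, a genuine sharpness analysis must also account for the first cumulants, which you set aside: the $s$ linear forms $\kappa_{i,1}-\kappa_{0,1}$ cut the ambient space down to a $\PP^{n-s}$ containing $\gL'$, replacing the criterion by $\del(n-s,s,k)<0$, i.e. $s > 2(n-k)/(k+4)$, which moves the true threshold even further from the stated one. As written, your argument proves the sufficiency claim but refutes, rather than establishes, the sharpness claim; the discrepancy between the $k+1$ in the statement and the $k+2$ in your derived criterion needs to be confronted, not elided with ``since $k+1 < k+2$.''
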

\noindent In some SSA problems, the differences of covariance matrices of the
random variables $X_i$ are rank deficient. As long as the rank is not
too low, Corollary \ref{cor:id2} provides a generalization of Theorem
\ref{thm:app2} for these situations.

\bibliographystyle{amsalpha}
\bibliography{bibliography}

\newcommand{\etalchar}[1]{$^{#1}$}
\def\cprime{$'$}
\providecommand{\bysame}{\leavevmode\hbox to3em{\hrulefill}\thinspace}
\providecommand{\MR}{\relax\ifhmode\unskip\space\fi MR }
\providecommand{\MRhref}[2]{%
  \href{http://www.ams.org/mathscinet-getitem?mr=#1}{#2}
}
\providecommand{\href}[2]{#2}
\begin{thebibliography}{vBMKM09}

\bibitem[Beh06]{MR2222727}
Roya Beheshti, \emph{Lines on projective hypersurfaces}, J. Reine Angew. Math.
  \textbf{592} (2006), 1--21. \MR{2222727 (2007a:14009)}

\bibitem[BVdV79]{MR510081}
W.~Barth and A.~Van~de Ven, \emph{Fano varieties of lines on hypersurfaces},
  Arch. Math. (Basel) \textbf{31} (1978/79), no.~1, 96--104. \MR{510081
  (80j:14004)}

\bibitem[Col79]{MR533324}
Alberto Collino, \emph{Lines on quartic threefolds}, J. London Math. Soc. (2)
  \textbf{19} (1979), no.~2, 257--267. \MR{533324 (80j:14005)}

\bibitem[Deb01]{MR1841091}
Olivier Debarre, \emph{Higher-dimensional algebraic geometry}, Universitext,
  Springer-Verlag, New York, 2001. \MR{MR1841091 (2002g:14001)}

\bibitem[DM98]{MR1654757}
Olivier Debarre and Laurent Manivel, \emph{Sur la vari\'et\'e des espaces
  lin\'eaires contenus dans une intersection compl\`ete}, Math. Ann.
  \textbf{312} (1998), no.~3, 549--574. \MR{1654757 (99j:14048)}

\bibitem[Eis95]{MR1322960}
David Eisenbud, \emph{Commutative algebra}, Graduate Texts in Mathematics, vol.
  150, Springer-Verlag, New York, 1995, With a view toward algebraic geometry.
  \MR{1322960 (97a:13001)}

\bibitem[GH78]{MR507725}
Phillip Griffiths and Joseph Harris, \emph{Principles of algebraic geometry},
  Wiley-Interscience [John Wiley \& Sons], New York, 1978, Pure and Applied
  Mathematics. \MR{MR507725 (80b:14001)}

\bibitem[GS10]{M2}
Daniel~R. Grayson and Michael~E. Stillman, \emph{Macaulay2, a software system
  for research in algebraic geometry, version 1.4}, Available at
  http://www.math.uiuc.edu/Macaulay2, 2010.

\bibitem[HMP98]{MR1646558}
Joe Harris, Barry Mazur, and Rahul Pandharipande, \emph{Hypersurfaces of low
  degree}, Duke Math. J. \textbf{95} (1998), no.~1, 125--160. \MR{1646558
  (99j:14043)}

\bibitem[KvBM{\etalchar{+}}12]{MR2913722}
Franz~J. Kir{\'a}ly, Paul von B{\"u}nau, Frank~C. Meinecke, Duncan A.~J.
  Blythe, and Klaus-Robert M{\"u}ller, \emph{Algebraic geometric comparison of
  probability distributions}, J. Mach. Learn. Res. \textbf{13} (2012),
  855--903. \MR{2913722}

\bibitem[Lar12]{FanoNotes}
Paul Larsen, \emph{Notes on {F}ano varieties of complete intersections}, arXiv
  math.AG:1211.6249 (2012).

\bibitem[LR09]{MR2542223}
J.~M. Landsberg and Colleen Robles, \emph{Fubini's theorem in codimension two},
  J. Reine Angew. Math. \textbf{631} (2009), 221--235. \MR{2542223
  (2010h:14078)}

\bibitem[LT10]{MR2745753}
J.~M. Landsberg and Orsola Tommasi, \emph{On the {D}ebarre-de {J}ong and
  {B}eheshti-{S}tarr conjectures on hypersurfaces with too many lines},
  Michigan Math. J. \textbf{59} (2010), no.~3, 573--588. \MR{2745753
  (2012a:14121)}

\bibitem[vBMKM09]{bunmeikirmul09finding}
Paul von B\"unau, Frank~C. Meinecke, Franz~J. Kir\'aly, and Klaus-Robert
  M\"uller, \emph{Finding stationary subspaces in multivariate time series},
  Phys. Rev. Lett. \textbf{103} (2009), no.~21, 214101.

\end{thebibliography}

\end{document}